\def\bA{{\mathbf{A}}}
\def\cC{{\mathcal{C}}}
\def\ep{\varepsilon}
\def\dW{{\dot{W}}}
\def\FF{\Phi}
\newtheorem{theorem}{Theorem}
\theoremstyle{plain}
\newtheorem{definition}[theorem]{Definition}
\newtheorem{proposition}[theorem]{Proposition}
\newtheorem{remark}[theorem]{Remark}
\newcommand\bel[1]{\begin{equation}\label{#1}}
\newcommand\ee{\end{equation}}
\numberwithin{theorem}{section}
\numberwithin{equation}{section}
\begin{document}
\title[Stationary PAM]
{Heat Equation With a Geometric Rough Path Potential in One Space Dimension:
Existence and Regularity of Solution}
\author{H.-J. Kim}
\curraddr[H.-J. Kim]{Department of Mathematics, USC\\
Los Angeles, CA 90089}
\email[H.-J. Kim]{kim701@usc.edu}
\urladdr{}
\author{S. V. Lototsky}
\curraddr[S. V. Lototsky]{Department of Mathematics, USC\\
Los Angeles, CA 90089}
\email[S. V. Lototsky]{lototsky@math.usc.edu}
\urladdr{http://www-bcf.usc.edu/$\sim$lototsky}

\subjclass[2000]{Primary
35R05; Secondary 35K20, 35D30, 60H15}

 \keywords{Classical solution, Fundamental Solution, Parabolic H\"{o}lder
 Spaces, Stratonovich Integral}

\begin{abstract}
A solution of the heat equation with a distribution-valued potential is
constructed by regularization.
When the potential is the generalized derivative of a H\"{o}lder
continuous function,  regularity of the resulting solution
is in line with the standard parabolic theory.
\end{abstract}

\maketitle

\today

\section{Introduction}
\label{sec:Intro}

Let $W=W(x)$ be a continuous function on $[0,\pi]$, and denote by
$\dot{W}$ the generalized derivative of $W$:
$$
\int_0^{\pi} W(x)h'(x)dx=-\int_0^{\pi} \dW(x)h(x)dx
$$
for every continuously differentiable $h$ with compact support in $(0,\pi)$.
By direct computations, $\dot{W}$ is a generalized function from
  the closure of  $L_2((0,\pi))$ with respect to the norm
\bel{norm-1}
\|h\|_{-1}^2=\sum_{k\geq 1}\frac{h_k^2}{k^2},\
h_k=\sqrt{\frac{2}{\pi}}\int_0^{\pi} h(x)\sin(kx)dx.
\ee

The objective of the paper is investigation of the equation
\bel{eq:main}
\begin{split}
\frac{\partial u(t,x)}{\partial t} &= \frac{\partial^2 u(t,x)}{\partial x^2}+
u(t,x)\dW(x),   \ 0<t<T,\ 0<x<\pi;\\
u(0,x)&=\varphi(x),\ u(t,0)=u(t,\pi)=0.
\end{split}
\ee

If $\dW$ were a H\"{o}lder continuous function of order $\alpha\in (0,1)$,
then standard parabolic regularity would imply that the classical solution
of \eqref{eq:main} is H\"{o}lder $2+\alpha$ in space and
H\"{o}lder $1+(\alpha/2)$ in time; cf.  \cite[Theorem 10.4.1]{KrylovHolder}.
If $W$ is H\"{o}lder $\alpha$, then it might be natural to expect for the
solution of \eqref{eq:main} to lose one derivative in space and ``one-half
of the derivative'' in time, that is, to become
H\"{o}lder $1+\alpha$ in space and
H\"{o}lder $(1+\alpha)/2$ in time. The objective of the paper is to
establish this result in a rigorous way.

The starting point must be interpretation of the product $u\dW$,
which we do using the ideas from  the rough path theory.
Here is an outline of the ideas.

If $h=h(x)$ is a continuously differentiable function with $h(0)=0$, then,
by the
fundamental theorem of calculus,
\bel{FTC}
\int_0^x h(s)h'(s)ds=\frac{h^2(x)}{2};
\ee
for a {\em continuous} function $W=W(x)$, the integral
$\int_0^x W(s)\dW(s)ds$ is, in general,  not defined.
An extension of the Riemann-Stieltjes construction, due to
Young,  is possible when $W$ is H\"older continuous of
order bigger than $1/2$; then \eqref{FTC} continues to
hold as long as $W(0)=0$. For less regular functions $W$, the value of
$\int_0^x W(s)\dW(s)ds=\int_0^x\int_0^{s}\dW(x_1)dx_1\dW(s)ds$
and possibly higher order
iterated integrals $\int_0^x\int_0^{x_1}\int_0^{x_2}
\dot{W}(s)ds\dot{W}(x_2)dx_2
\dW(x_1)dx_1$, etc. must be {\em postulated}, which is the
subject of the {\tt rough path theory}. The collection of all such
iterated integrals, known as the {\tt signature} of the rough path $W$,
 is encoded in the solution of the ordinary differential equation
\bel{rps}
 \frac{dY(x)}{dx}=Y(x)\dot{W}(x),\ x>0,
 \ee
One particular case of the rough path construction {\em stipulates}
that the traditional rules of calculus, such as \eqref{FTC},  continue to hold.
That is, given a continuous function  $W=W(x),\ x\geq 0$,
 the solution of the ordinary differential equation is {\em defined} to be
 \bel{grps}
 Y(x)=Y(0)e^{W(x)-W(0)};
 \ee
 in what follows we refer to this as the {\tt  geometric rough path (GRP)}
 solution of \eqref{rps}. The main consequence of \eqref{grps} is that
 if $W^{(\ep)}=
W^{(\ep)}(x)$, $\ep>0,$ is a sequence of continuously differentiable functions
such that $\lim\limits_{\ep\to 0}
\sup\limits_{x\in (0,L)} |W(x)-W^{(\ep)}(x)|=0$,
then
$$
\lim\limits_{\ep\to 0} \sup\limits_{x\in (0,L)} |Y(x)-Y^{(\ep)}(x)|=0,\ \
{\rm\  where\ \ }
Y^{(\ep)}(x)=Y(0)e^{W^{(\ep)}(x)-W^{(\ep)}(0)},
$$
that is,
$$
\frac{dY^{(\ep)}(x)}{dx}=Y^{(\ep)}(x)\frac{dW^{(\ep)}(x)}{dx}.
 $$

It is therefore natural to define a GRP solution of \eqref{eq:main}
as a  suitable limit of the solutions of
\begin{align}
\label{eq:CVS-1}
\frac{\partial u^{(\ep)}(t,x)}{\partial t} &=
\frac{\partial^2 u^{(\ep)}(t,x)}{\partial x^2} +
u^{(\ep)}(t,x)\dW^{(\ep)}(x),
\ 0<t< T, \ x\in (0,\pi),\\
\label{eq:IBC}
u^{(\ep)}(t,0)&=u^{(\ep)}(t,\pi)=0,
\ u^{(\ep)}(0,x)=\varphi(x),
\end{align}
where $\varphi\in L_2((0,\pi))$ and $W^{(\ep)}$ are absolutely continuous approximations of $W$ so that
$W^{(\ep)}(x)=\int_0^x \dW^{(\ep)}(s)ds$ and
$\dW_{\varepsilon}\in L_{\infty}((0,\pi))$.

Without further regularity assumptions on $\varphi$ and $\dW^{(\ep)}$,
equation \eqref{eq:CVS-1} can only be interpreted in variational form,
leading to a generalized solution; under additional
assumptions about $\varphi$ and $\dW^{(\ep)}$, equation \eqref{eq:CVS-1}
can have a classical solution. Accordingly, the paper defines and
investigates  the two possible GRP solutions of \eqref{eq:main},
classical (Section \ref{sec:CS}) and generalized (Section \ref{sec:g-rps}).
The last section discusses  extensions of the results to more general
equations.

In physics and related applications,
 equations of the type \eqref{eq:main} can represent
either the Sr\"{o}dinger equation (after making time imaginary)
or the (continuum) parabolic Anderson model.
In both cases, the potential $\dW$ is
often random, to model complexity and/or incomplete knowledge
 of the corresponding Hamiltonian. When the time
 variable is present in $W$, the product $u\dW$ can often be interpreted as the
 It\^{o} stochastic integral; with only space parameter, such an interpretation
 is not possible and  the Wick-It\^{o}-Skorokhod integral becomes one
 of the available options.

 The method of regularization, that is, using an approximation of $W$ with
 smooth functions, is a different approach. In the stochastic setting, it
 corresponds to the Stratonovich (or Fisk-Stratonovich) integral;
 cf. \cite{WZ65}. In a more recent theory of rough paths, it corresponds to
 the geometric rough path formulation of the equation.

 Paper \cite{Hu15} presents a comprehensive study of \eqref{eq:main}
 in the whole space for a large class of random potentials $\dW$, both with and without time parameter and under various interpretations of $u\dW$,
 including It\^{o}, Wick-It\^{o}-Skorokhod, and Stratonovich.
 This generality leads to less-than-optimal H\"{o}lder regularity
 (namely, $(1+\alpha)/2)$ in time and $\alpha/2$ in space)
 in the particular case of one space dimension. With the Wick-It\^{o}-Skorokhod
 interpretation, the optimal regularity for \eqref{eq:main} and similar equations
 was established  in \cite{KL17}.

 To simplify the presentation,  the current paper considers \eqref{eq:main}
 on a bounded interval with zero boundary conditions; Section \ref{sec:FD}
 outlines possible generalizations. Both classical and generalized solutions
 of \eqref{eq:main}  are constructed, as well as the fundamental solution.
  The limiting procedure via approximations of $\dW$ by regular
  (as opposed to generalized) functions provides
  a clear connection between various constructions. The special structure of the
 equation leads to additional regularity results
  if $W$ is H\"{o}lder continuous:  the classical solution is
 infinitely differentiable in time for $t>0$, and,
  with a particular choice  of the initial
 condition it is also possible to ensure that the solution is
 H\"{o}lder $1+(\alpha/2)$ near $t=0$, that is, better than what is guaranteed by
 the general theory.

 In two or higher space dimensions the
  method of regularization does not work directly because many of the
  limits do not exist. The difficulty is resolved with the help of
 additional constructions using renormalization techniques;
 cf. \cite{Hai15}. As a result, any comparison with one-dimensional setting
 is not especially informative.

Here is the summary of the main notations used in the paper:
 $h'=h'(x)$ denotes the usual
derivative; $\dot{W}$ denotes the generalized derivative;
$u_t$, $u_x$, $u_{xx}$ denote the corresponding partial derivatives
(classical or generalized); to shorten the notations, the interval $(0,\pi)$
is sometimes denoted by $G$.

\section{The Classical Geometric Rough Path Solution}
\label{sec:CS}

We start with an overview of the parabolic H\"{o}lder space.

Given a locally compact
metric space $\mathbb{X}$ with the distance function $\rho$,
denote by $\cC(\mathbb{X})$ the space of real-valued continuous
functions on $\mathbb{X}$. For $\alpha\in (0,1)$, denote
by $\cC^{\alpha}(\mathbb{X})$ the space of H\"{o}lder continuous
real-valued functions on $\mathbb{X}$, that is, functions $F$ satisfying
$$
\sup_{a,b: \rho(a,b)>0}\frac{|F(a)-F(b)|}{\rho^{\alpha}(a,b)}<\infty;
$$
$\cC^{\alpha}(\mathbb{X})$ is a Banach algebra with norm
$$
\|F\|_{\cC^{\alpha}(\mathbb{X})}=\sup_{a}|F(a)|+
\sup_{a,b: \rho(a,b)>0}\frac{|F(a)-F(b)|}{\rho^{\alpha}(a,b)}.
$$
 For a positive integer
$k$, the space $\cC^{k+\alpha}(\mathbb{X})$ consists of real-valued functions
that are $k$ times continuously differentiable, and the derivative of order
$k$ is in $\cC^{\alpha}(\mathbb{X})$.

Of special interest is
$\mathbb{X}=(0,T)\times G$, $G\subseteq\mathbb{R}^{\mathrm{d}},$
with {\em parabolic}  distance
$$
\rho\big((t,x),(s,y)\big)=\sqrt{|t-s|}+|x-y|.
$$
To emphasize the presence of both time and space variables, the
corresponding notations become $\cC^{\alpha/2,\alpha}\big((0,T)\times G\big)$
 and  $\cC^{k+(\alpha/2), 2k+\alpha}\big((0,T)\times G\big)$.
 Also,
 $$
 \|v\|_{\cC^{(1+\alpha)/2, 1+\alpha}\big((0,T)\times G\big)}=
 \sup_{t,x}|v(t,x)|+\|v_x\|_{\cC^{\alpha/2, \alpha}\big((0,T)\times G\big)}
 +\sup_{t\not=s,x}\frac{|v(t,x)-v(s,x)|}{|t-s|^{(1+\alpha)/2}}.
 $$

\begin{definition}
The classical solution of equation
\begin{align}
\label{eq:cl}
U_t(t,x)&=U_{xx}(t,x)+b(t,x)U_x(t,x)+c(t,x)U(t,x)+f(t,x),\
\\
\notag
x\in(0,\pi),\ & t\in (0,T),\ U(0,x)=\varphi(x),\ U(t,0)=U(t,\pi)=0,
\end{align}
is a function $U=U(t,x)$ with the following properties:
\begin{enumerate}
\item for every $t\in (0,T)$ and $x\in (0,\pi)$, the function $U$
is continuously differentiable in $t$, twice continuously differentiable in $x$,
and \eqref{eq:cl} holds;
\item for every $t\in (0,T)$, $U(t,0)=U(t,\pi)=0$;
\item for every $x\in [0,\pi]$, $\lim_{t\to 0+}U(t,x)=\varphi(x)$.
\end{enumerate}
\end{definition}

The following result is well known; cf. \cite[Theorem 10.4.1]{KrylovHolder}.
\begin{proposition}
\label{prop:cls}
If $\varphi\in \cC((0,\pi))$, $\varphi(0)=\varphi(\pi)=0$, and $b,c,f\in \cC^{\beta/2,\beta}\big((0,T)\times (0,\pi)\big)$ for some $\beta\in (0,1)$,
then  equation \eqref{eq:cl} has a  unique classical solution. If in addition
 $\varphi\in \cC^{2+\beta}((0,\pi))$,
 then  $U\in \cC^{1+\beta/2,2+\beta}\big((0,T)\times (0,\pi)\big)$.
 \end{proposition}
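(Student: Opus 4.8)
The plan is to treat \eqref{eq:cl} as a linear parabolic Dirichlet problem and to recover both assertions from the classical Schauder theory (cf. \cite[Ch.~8--10]{KrylovHolder}) for the operator $LU:=U_t-U_{xx}-bU_x-cU$, so that \eqref{eq:cl} reads $LU=f$ subject to $U(0,\cdot)=\varphi$ and $U(t,0)=U(t,\pi)=0$. First I would dispose of uniqueness via the maximum principle. Since $c\in\cC^{\beta/2,\beta}$ is bounded, the substitution $U=e^{\lambda t}V$ with $\lambda>\sup|c|$ turns $L$ into an operator with strictly negative zero-order coefficient; the difference of two solutions then satisfies a homogeneous problem with zero initial and boundary data, and the parabolic maximum principle forces it to vanish. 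The same comparison yields the a priori bound $\sup|U|\le N\big(\sup|\varphi|+\sup|f|\big)$, which I will use in the approximation step.

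For the regularity assertion, assume $\varphi\in\cC^{2+\beta}$. The heart of the matter is the boundary Schauder estimate
$$\|U\|_{\cC^{1+\beta/2,2+\beta}((0,T)\times(0,\pi))}\le N\big(\|f\|_{\cC^{\beta/2,\beta}}+\|\varphi\|_{\cC^{2+\beta}}\big),$$
with $N$ depending only on $T$, $\beta$, and the $\cC^{\beta/2,\beta}$-norms of $b,c$. I would obtain this in the standard way: localize with a partition of unity, freeze the coefficients at the centre of each small parabolic cylinder, and compare with the constant-coefficient heat operator, whose parametrix bounds are explicit; the lower-order terms $bU_x+cU$ together with the coefficient oscillations are moved to the right-hand side and absorbed into the $\beta$-Hölder seminorm on small cylinders. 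Near the lateral boundary $\{x=0,\pi\}$ the zero Dirichlet condition for the frozen heat operator is handled by odd reflection, reducing the boundary estimate to an interior one.

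With this a priori estimate in hand, existence together with the claimed regularity follows by the method of continuity. The family $L_\tau:=\partial_t-\partial_{xx}-\tau(b\partial_x+c)$, $\tau\in[0,1]$, interpolates between the heat operator ($\tau=0$), solvable in $\cC^{1+\beta/2,2+\beta}$ by the Fourier sine series, and $L$ ($\tau=1$); the uniform estimate shows that the set of $\tau$ for which the Dirichlet problem is solvable in $\cC^{1+\beta/2,2+\beta}$ is both open and closed in $[0,1]$, hence equals $[0,1]$. This proves the second assertion.

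Finally, for merely continuous $\varphi$ I would argue by approximation. Choose $\varphi_n\in\cC^{2+\beta}$ with $\varphi_n(0)=\varphi_n(\pi)=0$ and $\varphi_n\to\varphi$ uniformly, and let $U_n$ be the solutions furnished by the previous step. On each cylinder $[\delta,T]\times[0,\pi]$ the interior Schauder estimates bound $\|U_n\|_{\cC^{1+\beta/2,2+\beta}}$ uniformly in $n$, since the smoothing away from $t=0$ removes the dependence on $\|\varphi_n\|_{\cC^{2+\beta}}$; a diagonal subsequence then converges in $\cC^{1,2}_{\mathrm{loc}}(\{t>0\})$ to a function solving \eqref{eq:cl} classically for $t>0$. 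Meanwhile the sup-bound applied to $U_n-U_m$ gives $\sup|U_n-U_m|\le N\sup|\varphi_n-\varphi_m|$, so $U_n$ converges uniformly on all of $[0,T]\times[0,\pi]$, and the limit is continuous up to $t=0$ with $U(0,\cdot)=\varphi$. The main obstacle throughout is the boundary and corner behaviour: the Schauder estimate up to $\{x=0,\pi\}$ and attainment of the initial data at the corners $(0,0),(0,\pi)$ are precisely where the compatibility condition $\varphi(0)=\varphi(\pi)=0$ is essential, and verifying that the reflected, frozen-coefficient model does not degrade the $2+\beta$ regularity there is the most technical point.
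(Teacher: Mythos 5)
The paper does not prove this proposition at all: it is stated as a known result with a citation to \cite[Theorem 10.4.1]{KrylovHolder}, so there is no internal argument to compare against. Your outline is, in essence, the standard Schauder-theory proof that underlies that citation --- maximum principle for uniqueness and the sup bound, frozen-coefficient localization with odd reflection for the boundary estimate, method of continuity for existence in $\cC^{1+\beta/2,2+\beta}$, and approximation of a merely continuous $\varphi$ by smooth data combined with interior (in time) estimates --- and as a reconstruction of the textbook proof it is sound.

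One substantive caveat, which you partly sense in your closing remark but do not resolve: for the \emph{second} assertion, membership in $\cC^{1+\beta/2,2+\beta}$ up to the corners $(0,0)$ and $(0,\pi)$ forces not only the zeroth-order compatibility $\varphi(0)=\varphi(\pi)=0$ but also the first-order one. Indeed, if $U_t$ and $U_{xx}$ extend continuously to the corner, then $U(t,0)\equiv 0$ gives $U_t(0,0)=0$, and the equation then forces $\varphi''(0)+b(0,0)\varphi'(0)+f(0,0)=0$ (similarly at $x=\pi$). Without this condition the full $2+\beta$ estimate cannot hold in a neighbourhood of the corners, and your reflected frozen-coefficient model problem will not close there; the estimate holds only away from $t=0$ or under the additional compatibility hypothesis. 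This is a gap in the statement as the paper records it as much as in your sketch, but if you intend your argument to be self-contained you must either add the first-order compatibility condition to the hypotheses of the second assertion or weaken the conclusion to $\cC^{1+\beta/2,2+\beta}$ on $(\delta,T)\times(0,\pi)$ for $\delta>0$. The first assertion (existence and uniqueness of a classical solution for continuous $\varphi$ vanishing at the endpoints) is unaffected, since there you only use the regularity theory for $t>0$.
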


 The next result makes it possible to define the classical geometric
 rough path solution of equation \eqref{eq:main}.

 \begin{theorem}
 \label{th:sol1}
 Assume that, for some $\alpha,\beta\in (0,1)$,
  $W^{(\ep)}\in \cC^{1+\beta}((0,\pi))$ for all $\ep>0$,
  $W\in \cC^{\alpha}((0,\pi))$, and $\varphi\in \cC^{1+\alpha}((0,\pi))$.
   If
 $$
 \lim_{\ep\to 0} \|W-W^{(\ep)}\|_{L_{\infty}((0,\pi))}=0,
 $$
 then there exists a function
 $u\in \cC^{(1+\alpha)/2, 1+\alpha}\big((0,T)\times (0,\pi)\big)$ such that
 $$
 \lim_{\ep\to 0} \|u-u^{(\ep)}\|_{L_{\infty}\big((0,T)\times (0,\pi)\big)}=0.
 $$
\end{theorem}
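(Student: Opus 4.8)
The plan is to remove the distributional potential by a spatial change of the unknown that reduces \eqref{eq:CVS-1} to a uniformly parabolic equation with a \emph{bounded} first-order coefficient, to which Proposition \ref{prop:cls} applies; this substitution is the PDE counterpart of the exponential $e^{W}$ in the geometric rough path \eqref{grps}. First, for each fixed $\ep$ the coefficient $\dW^{(\ep)}$ lies in $\cC^{\beta}((0,\pi))$, so existence of a classical $u^{(\ep)}$ is not the issue; the whole difficulty is to obtain bounds and convergence that are uniform in $\ep$, and these cannot come from the $\cC^{\beta}$-norm of $\dW^{(\ep)}$, which is not controlled as $\ep\to0$. All uniform control must be extracted from $\sup_{\ep}\|W^{(\ep)}\|_{L_{\infty}}<\infty$ and from $W\in\cC^{\alpha}$.

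Next I would construct the conjugating factor. Let $\phi^{(\ep)}>0$ be the principal eigenfunction of $\partial_{xx}+\dW^{(\ep)}$ on $(0,\pi)$ with Neumann boundary conditions, normalized by $\sup\phi^{(\ep)}=1$, and $\lambda^{(\ep)}$ the corresponding eigenvalue, so $(\phi^{(\ep)})''+\dW^{(\ep)}\phi^{(\ep)}=-\lambda^{(\ep)}\phi^{(\ep)}$. The quadratic form $\int_0^\pi|h'|^2\,dx-\int_0^\pi\dW^{(\ep)}h^2\,dx$ is bounded below uniformly in $\ep$ (integrate the potential term by parts and use $\|W^{(\ep)}\|_{L_\infty}$ with a trace inequality), giving a uniform bound on $\lambda^{(\ep)}$ and a uniform positive lower bound for $\phi^{(\ep)}$ on the \emph{closed} interval, since the Neumann ground state does not vanish at the endpoints. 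Integrating $(\phi^{(\ep)})''=-\lambda^{(\ep)}\phi^{(\ep)}-\dW^{(\ep)}\phi^{(\ep)}$ once and then integrating the potential term by parts shows that $(\phi^{(\ep)})'+W^{(\ep)}\phi^{(\ep)}$ is uniformly Lipschitz; hence $\phi^{(\ep)}$ is uniformly Lipschitz and $g^{(\ep)}:=(\phi^{(\ep)})'/\phi^{(\ep)}+W^{(\ep)}$ is bounded in $\cC^{\alpha}$ uniformly in $\ep$. Setting $u^{(\ep)}=\phi^{(\ep)}v^{(\ep)}$ turns \eqref{eq:CVS-1} into
\[
v^{(\ep)}_t=v^{(\ep)}_{xx}+b^{(\ep)}v^{(\ep)}_x-\lambda^{(\ep)}v^{(\ep)},\qquad b^{(\ep)}=\frac{2(\phi^{(\ep)})'}{\phi^{(\ep)}}=2\big(g^{(\ep)}-W^{(\ep)}\big),
\]
with the same Dirichlet condition (as $\phi^{(\ep)}$ is bounded away from $0$ at the endpoints) and initial data $\varphi/\phi^{(\ep)}$. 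The distributional potential has thus been traded for a first-order term whose coefficient $b^{(\ep)}$ is uniformly bounded in $L_{\infty}$.

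For the uniform estimates I would use the Duhamel representation against the Dirichlet heat kernel $G=G(t,x,y)$ of $(0,\pi)$,
\[
v^{(\ep)}(t,x)=\int_0^\pi G(t,x,y)\frac{\varphi(y)}{\phi^{(\ep)}(y)}\,dy+\int_0^t\!\!\int_0^\pi G(t-s,x,y)\big(b^{(\ep)}v^{(\ep)}_x-\lambda^{(\ep)}v^{(\ep)}\big)(s,y)\,dy\,ds,
\]
together with its $x$-derivative, in which the kernels satisfy $\int_0^\pi|G|\,dy\le1$ and $\int_0^\pi|G_x|\,dy\lesssim(t-s)^{-1/2}$, both time-integrable. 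Because $b^{(\ep)}$ and $\lambda^{(\ep)}$ are uniformly bounded, a weakly singular Gronwall argument on $\sup_{s\le t,x}(|v^{(\ep)}|+|v^{(\ep)}_x|)$ yields bounds uniform in $\ep$; differencing two values of $\ep$ produces a source governed by $b^{(\ep)}-b^{(\ep')}=2(g^{(\ep)}-g^{(\ep')})-2(W^{(\ep)}-W^{(\ep')})$, by $\lambda^{(\ep)}-\lambda^{(\ep')}$, and by $\varphi/\phi^{(\ep)}-\varphi/\phi^{(\ep')}$, all of which tend to $0$ in $L_{\infty}$. Hence $\{v^{(\ep)}\}$ is Cauchy in $L_{\infty}$ and converges to some $v$. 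Crucially, the worst parabolic kernel $G_{xy}$ never meets the rough factor here: the bounded coefficient multiplies $v^{(\ep)}_x$, so only $G$ and $G_x$ occur, which is exactly why the scheme closes under mere $L_{\infty}$-closeness of $W^{(\ep)}$.

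Finally, passing $\ep\to0$ in the stationary problem gives $\phi^{(\ep)}\to\phi\in\cC^{1+\alpha}$ (positive), $\lambda^{(\ep)}\to\lambda$, $g^{(\ep)}\to g\in\cC^{\alpha}$, hence $b^{(\ep)}\to b=2(g-W)\in\cC^{\alpha}$ in $L_{\infty}$; the limit $v$ solves $v_t=v_{xx}+bv_x-\lambda v$ with $b\in\cC^{\alpha}\subset\cC^{\alpha/2,\alpha}$ and $\varphi/\phi\in\cC^{1+\alpha}$, so the Schauder estimate underlying Proposition \ref{prop:cls}, in the intermediate case of $\cC^{1+\alpha}$ data, gives $v\in\cC^{(1+\alpha)/2,1+\alpha}\big((0,T)\times(0,\pi)\big)$. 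Then $u=\phi v$ with $\phi\in\cC^{1+\alpha}$ positive lies in the same space, and $u^{(\ep)}=\phi^{(\ep)}v^{(\ep)}\to\phi v=u$ in $L_{\infty}$, which is the assertion. I expect the main obstacle to be the second step: producing $\phi^{(\ep)}$ with estimates uniform in $\ep$ and, above all, establishing that $(\phi^{(\ep)})'/\phi^{(\ep)}+W^{(\ep)}$ is uniformly $\cC^{\alpha}$ although $W^{(\ep)}$ is only controlled in $L_{\infty}$. This ``controlled-path'' cancellation is precisely where the geometric rough path structure enters, and it is what upgrades uniform closeness of $W^{(\ep)}$ to $W$ into the full convergence and regularity.
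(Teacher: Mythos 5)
Your overall strategy --- conjugate away the distributional potential by a positive function, reduce \eqref{eq:CVS-1} to a uniformly parabolic equation whose coefficients are controlled solely by $\|W^{(\ep)}\|_{L_{\infty}}$, pass to the limit by a Duhamel/Gronwall argument, and recover the regularity of $u$ from Schauder theory for the transformed equation --- is the paper's strategy. But you implement the conjugation with a far heavier object than necessary. The paper simply sets $H_W^{(\ep)}(x)=\exp\bigl(\int_0^x W^{(\ep)}(y)\,dy\bigr)$ and $v^{(\ep)}=u^{(\ep)}H_W^{(\ep)}$; since $(H_W^{(\ep)})'=W^{(\ep)}H_W^{(\ep)}$ and $(H_W^{(\ep)})''=\bigl(\dW^{(\ep)}+(W^{(\ep)})^2\bigr)H_W^{(\ep)}$, the transformed equation \eqref{eq:CVS-00} has coefficients $-2W^{(\ep)}$ and $(W^{(\ep)})^2$, which converge in $L_{\infty}$ by hypothesis, and the limit passage then needs only the maximum principle, a gradient bound for the limiting semigroup, and Gronwall. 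Your ground-state transform $u^{(\ep)}=\phi^{(\ep)}v^{(\ep)}$ accomplishes the same reduction (your first-order coefficient is $2(g^{(\ep)}-W^{(\ep)})$ with $g^{(\ep)}$ Lipschitz, versus the paper's $-2W^{(\ep)}$), but only after you have constructed and \emph{uniformly} controlled the Neumann principal eigenpair of $\partial_{xx}+\dW^{(\ep)}$, an operator whose potential is not uniformly bounded as $\ep\to0$. Everything you correctly identify as ``the main obstacle'' is work that the explicit exponential makes unnecessary.

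Two of your asserted steps are genuine gaps as written, though both are fillable. First, the uniform positive lower bound on $\phi^{(\ep)}$ over the closed interval does not follow from ``the Neumann ground state does not vanish at the endpoints'': that gives positivity for each fixed $\ep$, while the standard Harnack constant degenerates with $\|\dW^{(\ep)}\|_{L_{\infty}}\to\infty$. One must instead run Gronwall forwards and backwards on the first-order system for the pair $\bigl(\phi^{(\ep)},\,(\phi^{(\ep)})'+W^{(\ep)}\phi^{(\ep)}\bigr)$, whose coefficients involve only $W^{(\ep)}$ and $\lambda^{(\ep)}$, and evaluate at a minimum of $\phi^{(\ep)}$ where $(\phi^{(\ep)})'=0$; you derive the needed Lipschitz bound on $(\phi^{(\ep)})'+W^{(\ep)}\phi^{(\ep)}$ but never close this loop. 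Second, the convergence $\phi^{(\ep)}\to\phi$, $\lambda^{(\ep)}\to\lambda$, $g^{(\ep)}\to g$ in $L_{\infty}$ as $\ep\to0$ is asserted, not proved; it requires an Arzel\`a--Ascoli compactness argument plus identification of every subsequential limit with the (unique, suitably normalized) positive eigenpair of the limiting singular Sturm--Liouville problem. If you want the eigenfunction route, supply these two arguments; otherwise, replace $\phi^{(\ep)}$ by $H_W^{(\ep)}$ from \eqref{HW} and the proof collapses to the paper's half-page computation.
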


Note that there is no connection between $\alpha$ and $\beta$ in the conditions
of the theorem.

\begin{definition}
\label{def:sol1}
The function $u$ from Theorem \ref{th:sol1} is called the classical GRP
(geometric rough path)  solution of equation \eqref{eq:main}.
\end{definition}

{\bf Proof of Theorem \ref{th:sol1}.}
Let $u^{(\ep)}=u^{(\ep)}(t,x)$ be the classical solution of \eqref{eq:CVS-1}.
Define the functions
$$
H_W^{(\ep)}(x)=\exp\left(\int_0^x W^{(\ep)}(y)dy\right),\
v^{(\ep)}(t,x)=u^{(\ep)}(t,x)H_W^{(\ep)}(x).
$$
By direct computation,
\bel{eq:CVS-00}
\begin{split}
\frac{\partial v^{(\ep)}(t,x)}{\partial t}&=
\frac{\partial^2 v^{(\ep)}(t,x)}{\partial x^2} -
2W^{(\ep)}(x)\frac{\partial v^{(\ep)}(t,x)}{\partial x}+
\big(W^{(\ep)}(x)\big)^2v^{(\ep)}(t,x),\\
& t>0, \ x\in (0,\pi),\\
v^{(\ep)}(t,0)&=v^{(\ep)}(t,\pi)=0, \
v^{(\ep)}(0,x)=\varphi(x)H^{(\ep)}_W(x).
\end{split}
\ee
Define
\bel{HW}
H_W(x)=\exp\left(\int_0^x W(y)dy\right)
\ee
and
let $v=v(t,x)$ be the classical solution of
\bel{eq:CVS-0}
\begin{split}
\frac{\partial v(t,x)}{\partial t}&=\frac{\partial^2 v(t,x)}{\partial x^2} -
2W(x)\frac{\partial v(t,x)}{\partial x}+W^2(x)v(t,x),\ t>0, \ x\in (0,\pi),\\
v(t,0)&=v(t,\pi)=0, \ v(0,x)=\varphi(x)H_W(x).
\end{split}
\ee

Let $V^{(\ep)}(t,x)=v(t,x)-v^{(\ep)}(t,x)$.
We write the equation for $V^{(\ep)}$ as
\bel{eq:difference}
\begin{split}
 V^{(\ep)}_t&=
\widetilde{\bA}_WV^{(\ep)}+V^{(\ep)}W^2+ F^{(\ep)}\\
V^{(\ep)}(0,x)&=\varphi(x)\big(H_W(x)-H^{(\ep)}_W(x)\big),
\end{split}
\ee
where $\widetilde{\bA}_W$ is the operator
$$
h\mapsto h''-2Wh'
$$
with zero Dirichlet boundary conditions on $G=(0,\pi)$,
and
$$
F^{(\ep)}(t,x)=2v_x^{(\ep)}(t,x)\big(W(x)-W^{(\ep)}(x)\big)
+v^{(\ep)}(t,x)\Big(W^2(x)-\big(W^{(\ep)}(x)\big)^2\Big).
$$
Denote by $\widetilde{\FF}_t,\ t>0,$ the semigroup generated by the
operator $\widetilde{\bA}_W$. Then
\eqref{eq:difference} becomes
\bel{eq:difference1}
\begin{split}
V^{(\ep)}(t,x)&=\widetilde{\FF}_t[V^{(\ep)}(0,\cdot)](x)+
\int_0^t \widetilde{\FF}_{t-s}[V^{(\ep)}(s,\cdot)W^2(\cdot)](x)ds\\
&+ \int_0^t \widetilde{\FF}_{t-s}[F^{(\ep)}(s,\cdot)](x)ds.
\end{split}
\ee
The maximum principle for the operator $\widetilde{\bA}_W$ implies
\bel{MPT}
\|\widetilde{\FF}_th\|_{L_{\infty}(G)}\leq \|h\|_{L_{\infty}(G)},\ t>0;
\ee
similarly,
\bel{MPT1}
\sup_{0<t<T}\|(\widetilde{\FF}_th)_x\|_{L_{\infty}(G)}
\leq C_1\big(T,\|W\|_{L_{\infty}(G)}\big)\big(\|h\|_{L_{\infty}(G)}+
 \|h_x\|_{L_{\infty}(G)}\big);
\ee
cf. \cite[Sections 8.2, 8.3]{KrylovHolder}.

By assumption, there exists a positive number $C_0$ such that
$$
\|W\|_{L_{\infty}(G)}\leq C_0,\ \|W^{(\ep)}\|_{L_{\infty}(G)}\leq C_0
$$
for all $\ep>0$.
Define
$$
\Delta_W^{(\ep)}=\|W^{(\ep)}-W\|_{L_{\infty}(G)}.
$$
Then \eqref{eq:CVS-00},  \eqref{MPT}, \eqref{MPT1},
and Gronwall's inequality  imply
$$
\sup_{0<t<T}\|v^{(\ep)}_x\|_{L_{\infty}(G)}(t)
+\sup_{0<t<T}\|v^{(\ep)}\|_{L_{\infty}(G)}(t)
\leq C_2\|\varphi\|_{L_{\infty}(G)},
$$
with  $C_2$ depending only on $C_0$ and $T$.

Also, by the intermediate  value theorem,
$$
\|V^{(\ep)}\|_{L_{\infty}(G)}(0)\leq \|\varphi\|_{L_{\infty}(G)}e^{2\pi C_0}\,\Delta_W^{(\ep)}.
$$
Then we deduce from \eqref{eq:difference1} that
$$
\sup_{0<t<T}\|V^{(\ep)}\|_{L_{\infty}(G)}(t)
\leq C_3\Delta_W^{(\ep)},
$$
with $C_3$ depending only on $C_0,T,$ and $\varphi$.
 It remains to note that
 \bel{eq:MMM}
 u=\frac{v}{H_W},
 \ee
 $$ \ |u-u^{(\ep)}|\leq \frac{|V^{(\ep)}|}{H_W}+
 \frac{|H_W-H^{(\ep)_W}|}{H_W \cdot H^{(\ep)}_W}\, |v^{(\ep)}|
 $$
 and then, with   $C_4$ depending only on $C_0, T$ and $\varphi$,
 \bel{FinalBoundCl}
 \sup_{t\in (0,T),x\in G}|u(t,x)-u^{(\ep)}(t,x)|
 \leq C_4\|W-W^{(\ep)}\|_{L_{\infty}(G)},
\ee
completing the proof of the theorem.
$\hfill\square$

The following is the main result about the classical GRP
solution of \eqref{eq:main}.

\begin{theorem}
\label{th:c-grps}
If $W\in \cC^{\alpha}((0,\pi))$,
$\varphi\in \cC^{1+\alpha}((0,\pi))$, and $\varphi(0)=\varphi(\pi)=0$,
then \eqref{eq:main}
has a unique GRP solution given by
$$
u(t,x)=\frac{\FF_t[H_W\varphi](x)}{H_W(x)},
$$
where $\FF_t$ is the semigroup of the operator
$$
%\bel{AW}
\mathbf{A}_W: h(x)\mapsto h''(x)-2W(x)h'(x)+W^2(x)h(x)
%\ee
$$
with zero boundary conditions on $(0,\pi)$ and
$H_W$ is from \eqref{HW}. The solution
 belongs to the parabolic H\"{o}lder space
$$
\cC^{(1+\alpha)/2, 1+\alpha}\big((0,T)\times(0,\pi)\big)
$$
and is an infinitely differentiable function of $t$ for $t>0$.
If, in addition,
$\varphi(x)=\psi(x)/H_W(x)$ for some $\psi\in \cC^{2+\alpha}((0,\pi))$,
then $u\in\cC^{1+(\alpha/2), 1+\alpha}\big((0,T)\times(0,\pi)\big). $
\end{theorem}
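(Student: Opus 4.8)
The plan is to push everything through the substitution $u=v/H_W$ that already appears in the proof of Theorem~\ref{th:sol1}, and to reduce all regularity questions to the transformed equation \eqref{eq:CVS-0}, whose coefficients $b=-2W$ and $c=W^2$ lie in $\cC^{\alpha}((0,\pi))$ because $\cC^{\alpha}$ is a Banach algebra. First I would record that $H_W$ from \eqref{HW} is bounded above and below on $[0,\pi]$ and belongs to $\cC^{1+\alpha}$ (its derivative is $WH_W$), so that $1/H_W\in\cC^{1+\alpha}$ as well; since $H_W$ is independent of $t$, multiplication by $1/H_W$ is a bounded operation on $\cC^{(1+\alpha)/2,1+\alpha}$ and on $\cC^{1+\alpha/2,2+\alpha}$ and does not touch the time variable. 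Existence, uniqueness, and the stated formula then require almost no new work: the datum $\varphi H_W$ is continuous with $\varphi H_W(0)=\varphi H_W(\pi)=0$, so Proposition~\ref{prop:cls} produces a unique classical solution $v=\FF_t[\varphi H_W]$ of \eqref{eq:CVS-0}, and the estimate \eqref{FinalBoundCl} in the proof of Theorem~\ref{th:sol1} shows that the $L_\infty$ limit of $u^{(\ep)}$ equals $v/H_W$ for \emph{any} approximating sequence as in Theorem~\ref{th:sol1}. As $v$ and $H_W$ depend only on $W$ and $\varphi$, the GRP solution is well defined, unique, and given by $u=\FF_t[\varphi H_W]/H_W$.

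The analytic core is the claim $v\in\cC^{(1+\alpha)/2,1+\alpha}$ for the datum $g:=\varphi H_W\in\cC^{1+\alpha}$ with $g(0)=g(\pi)=0$; by the first paragraph this is equivalent to the same statement for $u$. I would split $\bA_W=\partial_{xx}+B$ with $Bh=-2Wh'+W^2h$ and write the mild formulation
\[
v(t)=P_tg+\int_0^tP_{t-s}\big(Bv(s)\big)\,ds,
\]
where $P_t$ is the Dirichlet heat semigroup on $(0,\pi)$, and then run a contraction argument in $\cC^{(1+\alpha)/2,1+\alpha}((0,\tau)\times(0,\pi))$ for small $\tau$. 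Three ingredients are needed: (i) the free term $P_tg$, viewed as a function of $(t,x)$, lies in $\cC^{(1+\alpha)/2,1+\alpha}$ with norm controlled by $\|g\|_{\cC^{1+\alpha}}$, the temporal bound coming from $\|\partial_{xx}P_tg\|_{L_\infty}\le Ct^{(\alpha-1)/2}$ together with $P_tg-P_sg=\int_s^t\partial_{xx}P_rg\,dr$, which integrates to the exponent $(1+\alpha)/2$; (ii) for $f\in\cC^{\alpha/2,\alpha}$ the Duhamel term $\int_0^tP_{t-s}f(s)\,ds$ lies in $\cC^{1+\alpha/2,2+\alpha}\hookrightarrow\cC^{(1+\alpha)/2,1+\alpha}$, which is the inhomogeneous Schauder estimate underlying Proposition~\ref{prop:cls}; and (iii) the algebra bound $\|Bv\|_{\cC^{\alpha/2,\alpha}}\le C\|W\|_{\cC^{\alpha}}\|v\|_{\cC^{(1+\alpha)/2,1+\alpha}}$. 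Because the Duhamel contribution carries a positive power of $\tau$, the map is a contraction on a ball for $\tau$ small; its fixed point coincides with the classical solution $v$ by uniqueness, yielding the bound on $[0,\tau]$, and the argument is iterated on later time slabs where the datum is already smooth. An alternative is to obtain the estimate by real interpolation at $\theta=1/2$ between the two endpoints $\cC^{\alpha}\to\cC^{\alpha/2,\alpha}$ and $\cC^{2+\alpha}\to\cC^{1+\alpha/2,2+\alpha}$ of the solution map.

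For the two remaining assertions I would argue as follows. Infinite differentiability in $t$ for $t>0$ follows because the uniformly elliptic operator $\bA_W$ with continuous coefficients generates an analytic semigroup, so $t\mapsto\FF_tg$ is $\cC^\infty$ (indeed real analytic) on $(0,\infty)$ with $\partial_t^n v=\bA_W^n\FF_tg$; dividing by the $t$-independent $H_W$ preserves this, so $u(\cdot,x)$ is $\cC^\infty$ in $t$ for each fixed $x$ and $t>0$. For the improved regularity, the hypothesis $\varphi=\psi/H_W$ with $\psi\in\cC^{2+\alpha}$ makes the datum of \eqref{eq:CVS-0} equal to $v(0,\cdot)=\varphi H_W=\psi\in\cC^{2+\alpha}$ with $\psi(0)=\psi(\pi)=0$, so Proposition~\ref{prop:cls} gives $v\in\cC^{1+\alpha/2,2+\alpha}$ outright; writing $u=v/H_W$ and using that $H_W$ is independent of $t$, the time regularity $1+\alpha/2$ is inherited by $u$ while the spatial regularity is capped at $1+\alpha$ by $H_W\in\cC^{1+\alpha}$, giving $u\in\cC^{1+\alpha/2,1+\alpha}$.

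The step I expect to be the main obstacle is ingredients (i)--(ii) of the second paragraph: establishing the sharp intermediate Schauder estimate from a datum of the borderline regularity $\cC^{1+\alpha}$, which falls strictly between the two cases covered by Proposition~\ref{prop:cls}. The coefficients being only $\cC^{\alpha}$ forbid differentiating \eqref{eq:CVS-0} in $x$ to reduce to a lower-order problem, so the estimate must be read off the heat kernel directly; the delicate points are the precise blow-up rate $\|\partial_{xx}P_tg\|_{L_\infty}\lesssim t^{(\alpha-1)/2}$ that produces exactly the time exponent $(1+\alpha)/2$, and the verification---using the compatibility $g(0)=g(\pi)=0$---that $P_tg$ remains in $\cC^{1+\alpha}$ uniformly up to the lateral boundary $x\in\{0,\pi\}$.
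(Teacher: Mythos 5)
Your proposal is correct, and at the structural level it follows the same route as the paper: reduce to the transformed equation \eqref{eq:CVS-0} via $u=v/H_W$, note that $H_W$ and $1/H_W$ are time-independent elements of $\cC^{1+\alpha}$ bounded away from zero, obtain $v=\FF_t[\varphi H_W]$ from Proposition~\ref{prop:cls}, identify $v/H_W$ with the GRP limit via \eqref{FinalBoundCl}, and read off the two regularity regimes and the $\cC^\infty$ time-smoothness (you via analyticity of the semigroup, the paper via Krylov's theorem for time-independent coefficients --- both standard and equivalent here). Where you genuinely diverge is at the central regularity claim: the paper declares $v\in\cC^{(1+\alpha)/2,1+\alpha}$ for data $\varphi H_W\in\cC^{1+\alpha}$ to be ``a direct consequence of \eqref{eq:MMM} and Proposition~\ref{prop:cls}'', but that proposition as stated only covers the two endpoints (merely continuous data, giving a classical solution with no uniform modulus up to $t=0$, and $\cC^{2+\alpha}$ data, giving $\cC^{1+\alpha/2,2+\alpha}$); the intermediate Schauder estimate at regularity $1+\alpha$ is exactly what is needed and is not literally contained there. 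You correctly flag this as the main obstacle and supply a proof: the mild formulation with the Dirichlet heat semigroup, the kernel bound $\|\partial_{xx}P_tg\|_{L_\infty}\lesssim t^{(\alpha-1)/2}\|g\|_{\cC^{1+\alpha}}$ integrated to produce the exponent $(1+\alpha)/2$, the inhomogeneous Schauder estimate for the Duhamel term, the algebra bound for $Bv=-2Wv'+W^2v$, and a short-time contraction followed by iteration (or, alternatively, real interpolation between the two endpoint solution maps). This is a legitimate and complete way to close the step; its cost is the extra heat-kernel work near the lateral boundary (where the compatibility $g(0)=g(\pi)=0$ and odd reflection are needed), and its benefit is that the quantitative estimate $\|v\|_{\cC^{(1+\alpha)/2,1+\alpha}}\lesssim\|\varphi H_W\|_{\cC^{1+\alpha}}$ is actually established rather than inferred, which is arguably a gap in the paper's own two-line proof. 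The remaining assertions (uniqueness of the limit, the formula $u=\FF_t[H_W\varphi]/H_W$, and the improved $\cC^{1+(\alpha/2),1+\alpha}$ regularity when $\varphi H_W=\psi\in\cC^{2+\alpha}$, with the spatial order capped at $1+\alpha$ by division by $H_W$) are handled exactly as in the paper.
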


\begin{proof}
This is a direct consequence of \eqref{eq:MMM} and Proposition \ref{prop:cls}.
The solution is a smooth function of $t$ for $t>0$ because
the coefficients of the operator $\mathbf{A}_W$ do not depend on time
(cf. \cite[Theorem 8.2.1]{KrylovHolder}),
whereas
$\cC^{\alpha}$ regularity of the coefficients of $\mathbf{A}_W$ implies that
the function $\FF_t[\varphi](x)$  cannot be better than $\cC^{2+\alpha}$ in space. Because $H_W\in \cC^{1+\alpha}((0,\pi))$,  a
typical initial condition cannot be better than $\cC^{1+\alpha}((0,\pi))$
and similarly, the solution
cannot be more regular in space than $\cC^{1+\alpha}((0,\pi))$.
If, with a special choice of the initial condition, we ensure that
$\varphi H_W\in \cC^{2+\alpha}((0,\pi))$, then, by Proposition \ref{prop:cls},
 we get better time regularity of the solution near $t=0$.
\end{proof}

 \begin{remark}
 If $W$ is a sample trajectory of the standard Brownian motion, then, with
 probability one,
 $W\in \cC^{1/2-}((0,\pi))$, that it, $W$ is H\"{o}lder continuous of
 every order less than $1/2$. By Theorem \ref{th:c-grps}, with
 $\alpha<1/2$,  we conclude that,
 with probability one, a {\em typical}
 classical GRP solution of the corresponding equation
 \eqref{eq:main} is $\cC^{3/4-}$ in time and $\cC^{3/2-}$ in space;
 with a special choice of the initial condition, it is possible to achieve
 $ \cC^{5/4-}$ regularity in time. Similarly, if $W=B^H$ is fractional
Brownian motion with Hurst parameter $H\in (0,1)$, then a  typical
 classical GRP solution of
 \eqref{eq:main} is $\cC^{(1+H)/2-}$ in time and $\cC^{1+H-}$
 in space.
 \end{remark}

 Inequality \eqref{FinalBoundCl} provides the  rate of convergence of
 the approximate solutions $u^{(\ep)}$ to $u$; this rate
 depends on the particular approximation $W^{(\ep)}$ of $W$.
 As an example, consider
 $$
 W^{(\ep)}(x)=\frac{1}{\ep}\int_0^{\pi}\phi\left(\frac{x-y}{\ep}\right)
 W(y)dy,
 $$
 where
 $$
  \phi\geq 0, \ \phi\in \cC^{\infty}(\mathbb{R}),\
   \phi(x)=0,\ x\notin (-\pi,\pi),\ \int_0^1\phi(x)dx=1,
   $$
  so that  $W^{(\ep)}\in \cC^{\infty}((0,\pi))$.

   By direct computation, if $W\in \cC^{\alpha}((0,\pi))$, then
   $$
   |W(x)-W^{(\ep)}(x)|\leq \ep^{\alpha}\
   \sup_{x\not=y}\frac{|W(x)-W(y)|}{|x-y|^{\alpha}},
$$
   and so
   $$
   \sup_{t,x}|u(t,x)-u^{(\ep)}(t,x)|\leq \tilde{C}_4\ep^{\alpha}.
   $$
   With some extra effort, similar arguments  show that, for every
   $\gamma\in (0,\alpha)$,
   $$
   \|u-u^{(\ep)}\|_{\cC^{(1+\gamma)/2,1+\gamma}\big((0,T)\times(0,\pi)\big)}
   \leq C_5 \ep^{\alpha-\gamma}.
   $$

\section{The Generalized Geometric Rough Path Solution}
\label{sec:g-rps}

The classical solution of \eqref{eq:CVS-1}
might not exist if the initial condition $\varphi$ is not a continuous function, or
if the functions $\dW^{(\ep)}$ are not H\"{o}lder continuous, or if
condition $\varphi(0)=\varphi(\pi)=0$ does not hold. The
generalized solution is an extension of the classical solution using the
idea of integration by parts.

To simplify the notations, we write
$$
G=(0,\pi),\ (g,h)_0=\int_0^{\pi} g(x)h(x)dx,\
\|h\|_0^2=(h,h)_0.
$$

Next, we need an overview of the Sobolev space on $G$.
 The space $H^1_0(G)$ is the closure of the
set of smooth functions with compact support in $G$ with respect to
the norm $\|\cdot\|_1$, where
$$
\|h\|_1^2=\|h\|_0^2+ \|h'\|_0^2.
$$
The space $H^{-1}(G)$, mentioned in the introduction, is a
separable Hilbert space with norm $\|\cdot\|_{-1}$ and is the dual of
$H^1_0(G)$ relative to the inner product $(\cdot,\cdot)_0$; the corresponding
duality will be denoted by $[\cdot, \cdot]_0$.

\begin{definition}
Given bounded measurable functions $a,b,c$ and a generalized function
$f$ from $L_2\big((0,T); H^{-1}(G)\big)$,
 the generalized solution of equation
\bel{gen-sol-basic}
v_t=(v_x+av)_x+bv_x+cv+f,\ t> 0,\ x\in G,
\ee
with initial condition $v|_{t=0}=\varphi\in L_2(G)$ and
zero Dirichlet boundary conditions $v|_{x=0}=v|_{x=\pi}=0$
is an element of $L_2\big((0,T); H^{1}_0(G)\big)$ such that,
for every $h\in H^1_0(G)$ and $t\in (0,T)$,
\begin{equation}
\label{eq:VS-0}
\begin{split}
\big( v,h\big)_0(t)&=\big(\varphi,h\big)_0 -
\int_0^t \big(v_x+av, h'\big)_0(s)ds\\
&+\int_0^t \big(bv_x +c,h\big)_0(s)ds+\int_0^t [f,h]_0(s)ds.
\end{split}
\end{equation}
\end{definition}

The following result is well known; cf. \cite[Theorem III.4.1]{Lady67}.

\begin{proposition}
\label{prop:gs-b}
Let $a=a(t,x),\  b=b(t,x), \ c=c(t,x)$ be bounded measurable functions,
$$
\varphi\in L_2(G),\ f\in L_2\big((0,T); H^{-1}(G)\big).
$$
Then equation \eqref{gen-sol-basic}
has a unique generalized solution and
\begin{align*}
&v\in L_2\big((0,T);H^1_0(G)\big)\bigcap\cC\big((0,T); L_2(G)\big),\\
&\sup_{t\in (0,T)}\|v\|_0^2(t)+\int_0^T
 \|v\|_1^2(s)ds \leq C \Big(
 \|\varphi\|_0^2+\int_0^T \|f\|_{-1}^2(s)ds\Big),
\end{align*}
with $C$ depending only on $T$ and the $L_{\infty}$ norms of $a,b,c$.
\end{proposition}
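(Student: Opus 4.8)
The plan is to construct the solution by the Galerkin method using the Dirichlet eigenbasis of $G=(0,\pi)$, to derive a uniform energy estimate that yields both the a priori bound and compactness, and then to recover uniqueness from the same estimate applied to the difference of two solutions.

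First I would take $e_k(x)=\sqrt{2/\pi}\,\sin(kx)$, $k\geq 1$, which is orthonormal in $L_2(G)$, orthogonal in $H^1_0(G)$, and is precisely the basis underlying the norm $\|\cdot\|_{-1}$ from \eqref{norm-1}. Seeking approximate solutions $v^N(t,x)=\sum_{k=1}^N c_k^N(t)e_k(x)$ that satisfy \eqref{eq:VS-0} with each test function $h=e_j$, $j\leq N$, reduces the problem to a linear system of ordinary differential equations for the $c_k^N$, with bounded measurable (in $t$) coefficients coming from $a,b,c$ and a forcing term $[f,e_j]_0$; Carath\'eodory's theorem then provides a unique absolutely continuous solution on $(0,T)$.

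The core of the argument is the energy estimate, which I expect to be the main obstacle. Choosing $h=v^N(t)$ (equivalently, multiplying the $j$-th equation by $c_j^N$ and summing over $j$) gives
\[
\tfrac{1}{2}\tfrac{d}{dt}\|v^N\|_0^2+\|v^N_x\|_0^2
=-(av^N,v^N_x)_0+(bv^N_x,v^N)_0+(cv^N,v^N)_0+[f,v^N]_0 .
\]
The difficulty is that the two first-order cross terms and the $H^{-1}$ source all involve $v^N_x$, so each must be controlled by Young's inequality, e.g. $|(av^N,v^N_x)_0|\leq \delta\|v^N_x\|_0^2+C_\delta\|a\|^2_{L_\infty}\|v^N\|_0^2$ and $|[f,v^N]_0|\leq \delta\|v^N_x\|_0^2+C_\delta\|f\|_{-1}^2+\tfrac12\|v^N\|_0^2$, with $\delta$ chosen small enough that a definite fraction of $\|v^N_x\|_0^2$ survives on the left, while $(cv^N,v^N)_0$ is harmless. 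Gronwall's inequality in integral form then produces the bound
\[
\sup_{0<t<T}\|v^N\|_0^2(t)+\int_0^T\|v^N\|_1^2(s)\,ds
\leq C\Big(\|\varphi\|_0^2+\int_0^T\|f\|_{-1}^2(s)\,ds\Big),
\]
uniformly in $N$, with $C$ depending only on $T$ and the $L_\infty$ norms of $a,b,c$.

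Finally, the uniform bound makes $\{v^N\}$ bounded in $L_2\big((0,T);H^1_0(G)\big)$ and in $L_\infty\big((0,T);L_2(G)\big)$, so a subsequence converges weakly (and weak-$*$) to some $v$ in these spaces; since \eqref{eq:VS-0} is linear in $v^N$, passing to the limit shows that $v$ is a generalized solution, and weak lower semicontinuity of the norms transfers the estimate to $v$. To obtain $v\in\cC\big((0,T);L_2(G)\big)$ I would read off from \eqref{gen-sol-basic} that $v_t\in L_2\big((0,T);H^{-1}(G)\big)$ and invoke the standard embedding of functions $w$ with $w\in L_2(H^1_0)$ and $w_t\in L_2(H^{-1})$ into $\cC\big([0,T];L_2(G)\big)$. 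Uniqueness is then immediate: the difference of two generalized solutions solves \eqref{gen-sol-basic} with $\varphi=0$ and $f=0$, and the energy estimate forces it to vanish.
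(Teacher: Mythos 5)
Your Galerkin/energy argument is correct and is essentially the standard proof of this result; the paper itself offers no proof, simply citing it as well known from \cite[Theorem III.4.1]{Lady67}, where the same method (Galerkin approximation, energy estimate with Young's inequality absorbing the first-order and $H^{-1}$ terms, Gronwall, weak compactness, and uniqueness from the estimate applied to the difference) is used. The only points worth spelling out in a full write-up are the density argument extending \eqref{eq:VS-0} from the span of the $e_j$ to all of $H^1_0(G)$, and the justification of testing a weak solution against itself in the uniqueness step, both of which are standard once $v_t\in L_2\big((0,T);H^{-1}(G)\big)$ is established.
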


If  $W^{(\ep)}$ is continuously differentiable and $u^{(\ep)}$
is a classical solution of \eqref{eq:CVS-1},
then we can multiply  both sides by a continuously differentiable
function $h$ with compact support in $G$ and integrate with respect to $t,x$
to get, after integration by parts,
\begin{equation}
\label{eq:VS-1}
\big( u^{(\ep)},h\big)_0(t)=\big(\varphi,h\big)_0 -
\int_0^t \big(u^{(\ep)}_x+u^{(\ep)}W^{(\ep)}, h'\big)_0(s)ds
-\int_0^t \big(u^{(\ep)}_x W^{(\ep)},h\big)_0(s)ds.
\end{equation}

Comparing \eqref{eq:VS-1} and \eqref{eq:VS-0}, we conclude that
$u^{(\ep)}$ is a generalized solution of
\bel{eq:VS-2}
u^{(\ep)}_t=(u^{(\ep)}_x+u^{(\ep)}W^{(\ep)})_x-u^{(\ep)}_xW^{(\ep)}.
\ee
If we now pass to the limit $\varepsilon\to 0$ in \eqref{eq:VS-2}
and assume that all the limits exist and all the equalities
continue to hold, then we get the equation
\bel{grp-gen-0}
u_t=(u_x+uW)_x-u_xW,
\ee
which is a particular case of \eqref{gen-sol-basic}. Before confirming
that this passage to the limit is indeed justified, let us use this non-rigorous
argument to define the generalized GRP solution of \eqref{eq:main}.

\begin{definition}
The generalized geometric  rough path (GRP) solution of \eqref{eq:main}
 is the generalized solution of \eqref{grp-gen-0}.
\end{definition}

The next result is an immediate consequence of Proposition \ref{prop:gs-b}.

\begin{theorem}
\label{th:g-grps}
 If $W\in L_{\infty} ((0,\pi))$ and $\varphi\in L_2((0,\pi))$, then
 \eqref{eq:main} has a unique generalized GRP solution $u$ and
 \begin{align*}
 &u\in L_2\big((0,T);H^1_0((0,\pi))\big)\bigcap\cC\big((0,T); L_2((0,\pi))\big),\\
 &\sup_{t\in (0,T)}\|u\|_0^2+\int_0^T
 \|u\|_1^2(s)ds \leq C \|\varphi\|_0^2,
 \end{align*}
 with  $C$ depending only on $T$ and $\|W\|_{L_{\infty}((0,\pi))}$.
\end{theorem}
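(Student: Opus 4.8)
The plan is to recognize the defining equation \eqref{grp-gen-0} of the generalized GRP solution as a special instance of the abstract equation \eqref{gen-sol-basic} and then invoke Proposition \ref{prop:gs-b} verbatim. First I would rewrite \eqref{grp-gen-0}, namely $u_t=(u_x+uW)_x-u_xW$, in the template form $v_t=(v_x+av)_x+bv_x+cv+f$ by reading off the coefficients $a=W$, $b=-W$, $c\equiv 0$, and the forcing term $f\equiv 0$. Because $W$ does not depend on $t$, each of $a,b,c$ is (trivially) a bounded measurable function of $(t,x)$ on $(0,T)\times G$, with $\|a\|_{L_\infty}=\|b\|_{L_\infty}=\|W\|_{L_\infty(G)}$ and $\|c\|_{L_\infty}=0$.

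Next I would verify the remaining hypotheses of Proposition \ref{prop:gs-b}: the initial datum $\varphi\in L_2(G)$ is given, and $f\equiv 0$ belongs to $L_2\big((0,T);H^{-1}(G)\big)$ with zero norm. With all hypotheses in place, the proposition yields at once the existence and uniqueness of a generalized solution $u\in L_2\big((0,T);H^1_0(G)\big)\cap\cC\big((0,T);L_2(G)\big)$, which by definition is the unique generalized GRP solution of \eqref{eq:main}.

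Finally, I would specialize the a priori estimate. Since $f\equiv 0$, the right-hand side $\|\varphi\|_0^2+\int_0^T\|f\|_{-1}^2(s)\,ds$ collapses to $\|\varphi\|_0^2$, giving
$$
\sup_{t\in(0,T)}\|u\|_0^2(t)+\int_0^T\|u\|_1^2(s)\,ds\le C\|\varphi\|_0^2.
$$
The constant $C$ provided by Proposition \ref{prop:gs-b} depends only on $T$ and the $L_\infty$ norms of $a,b,c$; as these are all controlled by $\|W\|_{L_\infty(G)}$, the constant depends only on $T$ and $\|W\|_{L_\infty((0,\pi))}$, as claimed.

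Because every step is a direct verification, I do not anticipate a genuine obstacle here; the only point requiring a moment's care is the correct identification of the divergence-form coefficient $a=W$ and the non-divergence first-order coefficient $b=-W$ in \eqref{grp-gen-0}, so that the two first-order terms are not conflated when matching against \eqref{gen-sol-basic}.
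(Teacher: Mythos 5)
Your proposal is correct and matches the paper exactly: the paper states Theorem \ref{th:g-grps} as an immediate consequence of Proposition \ref{prop:gs-b}, obtained by reading \eqref{grp-gen-0} as the special case of \eqref{gen-sol-basic} with $a=W$, $b=-W$, $c=0$, $f=0$, which is precisely your identification. The only added value in your write-up is making the coefficient matching and the collapse of the forcing term explicit, which the paper leaves to the reader.
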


Note that  generalized GRP solution  may exist even when $W$ is not continuous.

It remains to confirm that passing to the limit
$\varepsilon\to 0$ in \eqref{eq:VS-2} is justified and
indeed leads to \eqref{grp-gen-0}.

\begin{theorem}
 \label{th:sol2}
 Assume that $W^{(\ep)}\in L_{\infty} (G)$ for all $\ep>0$,
  $W\in L_{\infty} (G)$, and $\varphi\in L_2(G)$.
 Let $u^{(\ep)}$ be the generalized solution of \eqref{eq:VS-1}
 and let $u$ be the generalized solution of \eqref{grp-gen-0}.
 If
 $$
 \lim_{\ep\to 0} \|W-W^{(\ep)}\|_{L_{\infty}(G)}=0,
 $$
 then
 $$
 \lim_{\ep\to 0}
 \Big( \sup_{t\in (0,T)}\|u-u^{(\ep)}\|_0^2(t)+\int_0^T
 \|u-u^{(\ep)}\|_1^2(s)ds\Big)=0.
 $$
\end{theorem}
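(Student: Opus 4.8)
The plan is to set $R^{(\ep)}=u-u^{(\ep)}$, write down the linear equation it satisfies, recognize that equation as a special case of \eqref{gen-sol-basic} with coefficients that do \emph{not} depend on $\ep$ and with zero initial data, and then read off the convergence directly from the a priori bound of Proposition \ref{prop:gs-b}, once the inhomogeneous term is shown to vanish in $L_2\big((0,T);H^{-1}(G)\big)$. First I would subtract \eqref{grp-gen-0} from \eqref{eq:VS-2}. The only terms that do not combine trivially are the products with the potential, and the decisive step is to split them so that the $\ep$-independent function $W$ stays in the coefficients while the difference $\delta^{(\ep)}:=W-W^{(\ep)}$ is pushed entirely into the forcing. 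Writing $uW-u^{(\ep)}W^{(\ep)}=R^{(\ep)}W+u^{(\ep)}\delta^{(\ep)}$ and $u_xW-u^{(\ep)}_xW^{(\ep)}=R^{(\ep)}_xW+u^{(\ep)}_x\delta^{(\ep)}$, one obtains
\[
R^{(\ep)}_t=\big(R^{(\ep)}_x+R^{(\ep)}W\big)_x-R^{(\ep)}_xW+f^{(\ep)},\qquad f^{(\ep)}=\big(u^{(\ep)}\delta^{(\ep)}\big)_x-u^{(\ep)}_x\delta^{(\ep)},
\]
with $R^{(\ep)}(0,\cdot)=0$, since $u$ and $u^{(\ep)}$ start from the same $\varphi$. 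This is precisely \eqref{gen-sol-basic} with $a=W$, $b=-W$, $c=0$.

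Next I would apply the energy estimate of Proposition \ref{prop:gs-b} to $R^{(\ep)}$. Because $a,b,c$ are independent of $\ep$, the constant in that estimate depends only on $T$ and $\|W\|_{L_{\infty}(G)}$, hence is uniform in $\ep$; and because the initial datum is zero, the estimate collapses to
\[
\sup_{t\in(0,T)}\|R^{(\ep)}\|_0^2(t)+\int_0^T\|R^{(\ep)}\|_1^2(s)\,ds\leq C\int_0^T\|f^{(\ep)}\|_{-1}^2(s)\,ds.
\]
Thus the entire statement reduces to showing that the right-hand side tends to $0$ as $\ep\to0$.

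For the forcing I would use two elementary duality bounds that follow from the definition of $\|\cdot\|_{-1}$: for $g\in L_2(G)$ one has $\|g\|_{-1}\leq\|g\|_0$, and for a derivative $\|g_x\|_{-1}\leq\|g\|_0$, since $[g_x,h]_0=-(g,h')_0$ gives $|[g_x,h]_0|\leq\|g\|_0\|h'\|_0\leq\|g\|_0\|h\|_1$. Applying these to the two pieces of $f^{(\ep)}$ yields $\|f^{(\ep)}\|_{-1}\leq\|\delta^{(\ep)}\|_{L_{\infty}(G)}\big(\|u^{(\ep)}\|_0+\|u^{(\ep)}_x\|_0\big)=\|\delta^{(\ep)}\|_{L_{\infty}(G)}\|u^{(\ep)}\|_1$, and hence
\[
\int_0^T\|f^{(\ep)}\|_{-1}^2(s)\,ds\leq\|\delta^{(\ep)}\|_{L_{\infty}(G)}^2\int_0^T\|u^{(\ep)}\|_1^2(s)\,ds.
\]
The remaining integral is controlled uniformly in $\ep$ by Theorem \ref{th:g-grps}: since $\|W^{(\ep)}\|_{L_{\infty}(G)}$ stays bounded as $\ep\to0$, one has $\int_0^T\|u^{(\ep)}\|_1^2(s)\,ds\leq C\|\varphi\|_0^2$ with $C$ independent of $\ep$. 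Therefore the forcing integral is $O\big(\|W-W^{(\ep)}\|_{L_{\infty}(G)}^2\big)$ and vanishes as $\ep\to0$, which is the claim.

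I do not expect a serious technical obstacle; the content here is structural rather than computational. The one point that must be handled with care is the bookkeeping in the difference equation --- arranging that $W$, not $W^{(\ep)}$, occupies the coefficient slots, so that Proposition \ref{prop:gs-b} delivers a constant that does not degenerate as $\ep\to0$ --- together with the observation that the divergence-form part $\big(u^{(\ep)}\delta^{(\ep)}\big)_x$ of the forcing lives naturally in $H^{-1}(G)$ and is therefore measured by the $L_2$ norm of $u^{(\ep)}\delta^{(\ep)}$ rather than of its derivative. With those two choices made, everything else is a direct application of the a priori estimate and of the uniform energy bound already established in Theorem \ref{th:g-grps}.
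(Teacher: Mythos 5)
Your proof is correct and follows essentially the same route as the paper: form the difference equation for $u-u^{(\ep)}$ with the $\ep$-independent potential $W$ in the coefficient slots and $(W-W^{(\ep)})$ times $u^{(\ep)}$ pushed into the $H^{-1}$ forcing, then apply the energy estimate of Proposition \ref{prop:gs-b} together with the uniform bound $\int_0^T\|u^{(\ep)}\|_1^2(s)\,ds\le C\|\varphi\|_0^2$. Your write-up is in fact a bit more explicit than the paper's about the duality estimate showing that the divergence-form piece $\big(u^{(\ep)}(W-W^{(\ep)})\big)_x$ of the forcing is controlled in $H^{-1}(G)$ by the $L_2$ norm of $u^{(\ep)}(W-W^{(\ep)})$.
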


\begin{proof}
Let $U^{(\ep)}=u-u^{(\ep)}$. Then \eqref{eq:VS-1} and
 \eqref{grp-gen-0} imply that $U^{(\ep)}$ is the generalized solution
 of
 $$
 U^{(\ep)}_t=(U^{(\ep)}_x+U^{(\ep)}W)_x-U^{(\ep)}_xW+
 (u^{(\ep)}-u^{(\ep)}_x)\cdot(W-W^{(\ep)}),\ U|_{t=0}=0.
 $$
 Recall that $C_0$ denotes the common upper bound on the
 $L_{\infty}$ norms of $W$ and $W^{(\ep)}$.
 By Proposition \ref{prop:gs-b},
 \begin{align*}
 &\sup_{t\in (0,T)}\|U^{(\ep)}\|_0^2(t)
 +\int_0^T\|U^{(\ep)}\|_1^2(s)ds\leq
 C_6(T,C_0) \|W-W^{(\ep)}\|_{L_{\infty}(G)}\int_0^T \|u^{(\ep)}\|_1^2(t)dt,
 \\
& \int_0^T \|u^{(\ep)}\|_1^2(t)dt\leq C_7(T,C_0)\|\varphi\|_0^2,
 \end{align*}
 completing the proof.
 \end{proof}

 \begin{remark}
 \label{rm:equiv}
By construction, a classical solution of \eqref{eq:CVS-1} is automatically a
generalized solution. Then Theorems \ref{th:c-grps} and \ref{th:sol2} imply that
a classical GRP solution of \eqref{eq:main},
if  exists, coincides with the generalized GRP solution.
\end{remark}

We will now construct the fundamental GRP solution of equation \eqref{eq:main}.
Let $\mathbf{L}:H^1_0(G)\to H^{-1}(G)$ be the operator
$$
 h\mapsto -\big(h_x+hW\big)_x+h_xW.
$$
Then \eqref{grp-gen-0} becomes
$$
u_t=-\mathbf{L} u.
$$
It is known \cite{Savchuk99} that,
for every $W\in L_{\infty}((0,\pi))$,
\begin{itemize}
\item The operator $\mathbf{L}$
has pure point spectrum;
\item  The eigenvalues $\lambda_k, k\geq 1,$
satisfy
$$
\lim_{k\to \infty}\frac{\lambda_k}{k^2}=1;
$$
\item The corresponding eigenfunctions $\mathfrak{m}_k$ belong  to
$H^1_0((0,\pi))$;
\item The collection $\{\mathfrak{m}_k,\ k\geq 1\}$ can be chosen to
form an orthonormal basis in $L_2((0,\pi))$.
\end{itemize}
 Then, using the functions
 $\mathfrak{m}_k$ in the definition of the generalized solution of
 \eqref{grp-gen-0}, we conclude by Theorem \ref{th:g-grps} that
\bel{FS-sol}
u(t,x)=\sum_{k=1}^{\infty} e^{-\lambda_k t}
(\varphi,\mathfrak{m}_k)_0\, \mathfrak{m}_k(x).
\ee
Equality \eqref{FS-sol} suggests calling the function
$$
\mathfrak{p}(t,x,y)=\sum_{k=1}^{\infty} e^{-\lambda_k t}
\mathfrak{m}_k(x)\mathfrak{m}_k(y)
$$
the {\tt fundamental GRP solution}  of \eqref{eq:main}.
By Theorem \ref{th:c-grps},
$$
\mathfrak{p}(t,x,y)=e^{-\int_0^xW(s)ds}\,\mathfrak{p}_W(t,x,y)\,
e^{\int_0^yW(s)ds},
$$
where $\mathfrak{p}_W$ is the fundamental solution of \eqref{eq:CVS-0}

\section{Further Directions}
\label{sec:FD}

The following extensions are straightforward:
\begin{enumerate}
\item Classical GRP solution for the equation
\bel{fd-gen-1}
u_t=au_{xx}+bu_x+cu+f+u\dot{W}
\ee
 $t\in (0,T),\ x\in (L_1,L_2)$ with H\"{o}lder continuous,
 in $t,x$,  functions $a,b,c,f,$ with
$\inf_{t,x}a(t,x)>0$, and with {\em separated}
 boundary conditions
$$
p_1u(t,L_1)+p_2u_x(t,L_1)=0,\ p_3u(t,L_2)+p_4u_x(t,L_2)=0,
$$
because the results from \cite[Chapter 10]{KrylovHolder} and
\cite[Chapter IV]{Lady67} about solvability of parabolic equations with
H\"{o}lder continuous coefficients still apply.

Note that the change of the unknown function
\bel{ch-var}
u(t,x)=v(t,x)\exp\left(-\int_{L_1}^x W(s)ds\right)
\ee
affects the boundary conditions by changing some of the
 coefficients $p_k$.
\item Generalized GRP solution for  equation \eqref{fd-gen-1}
or for equation
$$
%\bel{fd-gen-2}
u_t=\big(au_{x}+\tilde{a}u)_x+bu_x+cu+f+u\dot{W},
%\ee
$$
with zero Dirichlet boundary conditions.
\end{enumerate}

The following extensions are most likely to be possible as well,
but, because the standard parabolic regularity results  (e.g. those
in \cite{KrylovHolder, Lady67}) do not apply, much more effort
could be necessary:
\begin{enumerate}
\item  Classical GRP solution for \eqref{fd-gen-1}
 with general boundary conditions
 \begin{equation}
 \label{fd-GBC}
 \begin{split}
&p_{11}u(t,L_1)+p_{12}u_x(t,L_1)+p_{13}u(t,L_2)+p_{14}u_x(t,L_2)=0,\\
&p_{21}u(t,L_1)+p_{22}u_x(t,L_1)+p_{23}u(t,L_2)+p_{24}u_x(t,L_2)=0.
\end{split}
\end{equation}
Understanding \eqref{fd-GBC}
is necessary, for example, to study \eqref{eq:main}
with periodic boundary conditions
$$
u(t,L_1)=u(t,L_2),\
u_x(t,L_1)=u_x(t,L_2),
$$
after the change of the unknown function according to \eqref{ch-var}.

\item Generalized GRP solution  with boundary conditions other
than zero Dirichlet:  complications start at the integration by parts stage.

\item Equation \eqref{eq:main} on the $(-\infty, +\infty)$ or
$(0,+\infty)$ without assuming that $W$ is bounded:
 while H\"{o}lder regularity is essentially a local property and
should be expected to hold, there are  technical difficulties related to
 the analysis of equation \eqref{eq:CVS-0} unless there is an
additional assumption that $W$ is uniformly bounded.
\end{enumerate}

\def\cprime{$'$}
\providecommand{\bysame}{\leavevmode\hbox to3em{\hrulefill}\thinspace}
\providecommand{\MR}{\relax\ifhmode\unskip\space\fi MR }
% \MRhref is called by the amsart/book/proc definition of \MR.
\providecommand{\MRhref}[2]{%
  \href{http://www.ams.org/mathscinet-getitem?mr=#1}{#2}
}
\providecommand{\href}[2]{#2}


\begin{thebibliography}{10}





\bibitem{Hai15}
M. Hairer and C. Labb$\acute{e}$, \emph{A simple construction of the
continuum parabolic Anderson model on $\mathbb{R}^2$}, Electron. Commun. Probab.
\textbf{20} (2015), no. 43, 1-11.


\bibitem{Hu15}
Y.~Hu, J.~Huang, D.~Nualart, and S.~Tindel, \emph{Stochastic heat equations
  with general multiplicative {G}aussian noises: {H}\"older continuity and
  intermittency}, Electron. J. Probab. \textbf{20} (2015), no. 55, 50pp.

\bibitem{KL17}
H.-J. Kim and S. V. Lototsky, \emph{Time-homogeneous
parabolic Wick-Anderson model in one space dimension:
regularity of solution}, Stochastics and Partial Differential Equations:
Analysis and Computations,  \textbf{5} (2017), no. 4, pp. 559--591.

\bibitem{KrylovHolder}
N. V. Krylov, \emph{Lectures on Elliptic and Parabolic Equations in H\"older Spaces},
AMS, (1996).



\bibitem{Lady67}
O.~A. Lady{\v{z}}enskaja, V.~A. Solonnikov, and N.~N. Ural{\cprime}ceva,
  \emph{Linear and quasilinear equations of parabolic type}, Translations of
  Mathematical Monographs, Vol. 23, American Mathematical Society, Providence,
  R.I., 1968.

\bibitem{Savchuk99}
A. M. Savchuk and A. A. Shkalikov, \emph{Sturm-liouville operators with singular potentials},
Math. Notes \textbf{66} (1999), no. 6, 741--753.

\bibitem{WZ65}
E. Wongand M. Zakai,
   \emph{On the convergence of ordinary integrals to stochastic
              integrals}, Ann. Math. Statist. \textbf{36} (1965), 1560--1564.


\end{thebibliography}
\end{document}